\numberwithin{equation}{section}
\theoremstyle{plain}
\newtheorem{thm}{Theorem}[section]
\newtheorem{lem}[thm]{Lemma}
\newtheorem{prop}[thm]{Proposition}
\newtheorem{cor}[thm]{Corollary}
\newtheorem*{thm*}{Theorem}
\newtheorem*{lem*}{Lemma}
\newtheorem*{prop*}{Proposition}
\newtheorem*{cor*}{Corollary}
\theoremstyle{definition}
\newtheorem{defn}[thm]{Definition}
\newtheorem*{defn*}{Definition}
\newtheorem{rem}[thm]{Remark}
\newtheorem*{rem*}{Remark}
\newtheorem{notation}[thm]{Notation}{}
{}
{}
\newtheorem*{ack}{Acknowledgements}{}
\theoremstyle{remark}
{}
{}
{}
\def\cie{\subseteq}
\def\iso{\cong}
\def\to{\longrightarrow}
\def\rimp{\Rightarrow}
\def\S{\Sigma}
\def\Heyt{\mathsf{Heyt}}
\def\cHeyt{\mathsf{cHeyt}}
\def\Bool{\mathsf{Bool}}
\def\cBool{\mathsf{cBool}}
\def\sfD{\mathsf{D}}
\def\sfT{\mathsf{T}}
\def\sfS{\mathsf{S}}
\def\sfX{\mathsf{X}}
\def\op{\mathrm{op}}
\DeclareMathOperator{\Loc}{\mathrm{Loc}}
\DeclareMathOperator{\loc}{\mathrm{loc}}
\title{Complete Boolean algebras are Bousfield lattices}
\author{Greg Stevenson}
\address{Greg Stevenson, School of Mathematics and Statistics,
University of Glasgow,
University Place,
Glasgow G12 8SQ
}
\email{gregory.stevenson@glasgow.ac.uk}
\urladdr{http://www.maths.gla.ac.uk/~gstevenson/}
\begin{document}

\begin{abstract}
\noindent Given a complete Heyting algebra we construct an algebraic tensor triangulated category whose Bousfield lattice is the Booleanization of the given Heyting algebra. As a consequence we deduce that any complete Boolean algebra is the Bousfield lattice of some tensor triangulated category. Using the same ideas we then give two further examples illustrating some interesting behaviour of the Bousfield lattice.
\end{abstract}

\maketitle

%\tableofcontents

\section{Introduction}

A sizable amount of effort has, in the past decades, been invested into understanding lattices of subcategories of triangulated categories. There have been numerous successes, but despite these advances we still know very little in general. The situation improves if one imposes some additional structure: in many cases one runs into tensor triangulated categories, i.e.\ triangulated categories equipped with a compatible symmetric monoidal structure. In this case one can study ideals\textemdash subcategories closed under tensoring. Restricting to the lattice of ideals improves the situation somewhat, but what we know is still very limited.

One can then further restrict to ideals which arise as the kernel of tensoring with some object. Such an ideal is called a Bousfield class, after the pioneering work of Bousfield \cite{BousfieldLSH, BousfieldBoolean}, and the collection of such classes is called the Bousfield lattice; the name is slightly misleading as a priori there could be a proper class of Bousfield classes. More is known about the Bousfield lattice, for instance it was shown by Ohkawa in \cite{Ohkawa} that there are a set of Bousfield classes in the stable homotopy category, so the Bousfield lattice is in fact an honest lattice in that case. This was generalised to arbitrary well generated tensor triangulated categories by Iyengar and Krause \cite{IKBousfield}.

There are many results about Bousfield lattices in various cases, but again what one can hope to prove in general remains rather a mystery. The aim of this note is to give some naively constructed examples of tensor triangulated categories which illustrate certain properties of the Bousfield lattice; it serves to show that one can not be too ambitious in trying to prove general results.

We show in particular that any complete Boolean algebra arises as the Bousfield lattice of a `nice' tensor triangulated category. This is achieved by realizing the Booleanization functor on complete Heyting algebras via triangulated categories. A slight modification of this construction is also used to give an example of a localizing tensor ideal which is not a Bousfield class and to show that the property of having all radical tensor ideals arise as Bousfield classes is not preserved under localization.

\begin{ack}
I'm thankful to the referee for their careful reading of the manuscript; they provided several helpful comments which resulted in improvements to the exposition.
\end{ack}

\section{Recollections on Heyting and Boolean algebras}

In this section we give brief reminders on a number of definitions we will make use of including those of Heyting algebras, Boolean algebras, and the Booleanization functor. Further details, including proofs omitted here, can be found in Section 1 of \cite{StoneSpaces}.

\begin{defn}
A \emph{lattice} $L$ is a poset $(L, \leq)$ such that every pair of elements admits a greatest lower bound (\emph{meet}) denoted $\wedge$ and a least upper bound (\emph{join}) denoted $\vee$. We say $L$ is \emph{bounded} if it admits a minimal and a maximal element, \emph{complete} if arbitrary subsets admit meets and joins, and \emph{distributive} if for all $l_1, l_2, l_3 \in L$ we have
\begin{displaymath}
l_1 \wedge (l_2 \vee l_3) = (l_1 \wedge l_2) \vee (l_1 \wedge l_3).
\end{displaymath}
A lattice $L$ is a \emph{frame} if it is complete and satisfies the stronger distributivity condition that
\begin{displaymath}
l \wedge (\bigvee_{i\in I} m_i) = \bigvee_{i\in I} (l\wedge m_i)
\end{displaymath}
for all $l\in L$ and any family of elements $\{m_i\}$ of $L$ indexed by some set $I$.
\end{defn}

\begin{notation}
All our lattices are assumed to be bounded. Given a lattice $L$ we denote by $0$ and $1$ (or $0_L$ and $1_L$ if our notation would otherwise be ambiguous) the minimal and maximal elements of $L$ respectively.
\end{notation}

\begin{rem}\label{rem_cat1}
In more categorical language a poset is just, up to equivalence, a category in which every hom-set contains at most one element (a \emph{thin category}); the hom-set from an object $l$ to an object $m$ being non-zero precisely if $l\leq m$. From this point of view $L$ is a bounded lattice precisely if the corresponding category is finitely complete and cocomplete. It is distributive if finite products commute with finite colimits.
\end{rem}

\begin{defn}
A \emph{Heyting algebra} is a lattice $H$ equipped with an implication operation $\rimp\colon H^\op \times H \to H$ satisfying the following universal property:
\begin{displaymath}
(l \wedge m) \leq n \;\; \text{if and only if} \;\; l \leq (m \rimp n)
\end{displaymath}
for all $l,m,n \in H$. A \emph{complete Heyting algebra} is a Heyting algebra which is complete as a lattice.

A morphism of Heyting algebras is a morphism of lattices which preserves implication. A morphism of complete Heyting algebras is a morphism of lattices which preserves implication and arbitrary joins.
\end{defn}

If one thinks of lattice elements as `statements', viewing $\wedge$ as `and' and $\vee$ as `or', then the implication operation can really be thought of as implication, i.e.\ $a \rimp b$ should be read as `$a$ implies $b$'. For instance, one can verify the identities
\begin{displaymath}
(l \rimp l) = 1 \quad \text{and} \quad l \wedge (l \rimp m) = l \wedge m
\end{displaymath}
which can be interpreted as `$l$ implies $l$ is true` and `$l$ and $l$ implies $m$ is equivalent to $l$ and $m$'.

\begin{rem}
The implication operation also admits a pleasing categorical description. Following on from Remark~\ref{rem_cat1} a complete Heyting algebra is nothing more than a Cartesian closed thin category. The implication operator is just a right adjoint to the categorical product $\wedge$ i.e.\ it's an internal hom.
\end{rem}

It is standard that Heyting algebras are distributive and that complete Heyting algebras are the same as frames i.e.\ complete distributive lattices. In a frame $L$ the implication operation $l \rimp m$, for $l,m\in L$, is given by the join of the set $\{x\in L \; \vert \; x\wedge l \leq m\}$. However, not every morphism of frames preserves the implication operation. We denote the category of (complete) Heyting algebras and (complete) Heyting algebra morphisms by $\Heyt$ ($\cHeyt$).

\begin{defn}
Let $H$ be a Heyting algebra. The \emph{pseudocomplement} or \emph{negation} of $h\in H$ is defined to be
\begin{displaymath}
\neg h := (h\rimp 0).
\end{displaymath}
The element $\neg h$ is the largest element of $H$ such that $h\wedge \neg h = 0$. We say $h$ is \emph{regular} if $\neg \neg h = h$ and that $h$ is \emph{complemented} if $h \vee \neg h = 1$. We recall that regularity of $h$ is equivalent to the condition that there exists $h'$ with $h = \neg h'$ and that every complemented element of a Heyting algebra is regular.

We denote by $H_{\neg \neg}$ the subposet of regular elements of $H$. There is an induced structure of a Heyting algebra on $H_{\neg \neg}$, but we note that $H_{\neg \neg}$ may not be a Heyting subalgebra of $H$. 
\end{defn}

\begin{rem}
The meet $\wedge$, implication $\rimp$, and the minimal and maximal elements $0$ and $1$ for the induced Heyting algebra structure on $H_{\neg \neg}$ coincide with those of $H$. However, the join can be different. The join of $l,m\in H_{\neg \neg}$ can be expressed in $H$ as
\begin{displaymath}
l \vee_{H_{\neg \neg}} m = \neg(\neg l \wedge \neg m)
\end{displaymath}
and this may not coincide with $l\vee m$.
\end{rem}

\begin{defn}
A (complete) Heyting algebra $B$ is a \emph{(complete) Boolean algebra} if it satisfies one (and hence both) of the following equivalent conditions:
\begin{itemize}
\item[(1)] every element of $B$ is regular;
\item[(2)] every element of $B$ is complemented.
\end{itemize}
A morphism of (complete) Boolean algebras is just a morphism of (complete) Heyting algebras.
\end{defn}

We denote the category of (complete) Boolean algebras and morphisms of (complete) Boolean algebras by $\Bool$ ($\cBool$).

Let $H$ be a Heyting algebra. Then double negation defines a morphism of Heyting algebras $\neg \neg \colon H \to H_{\neg \neg}$. In fact double negation gives a monad on $\Heyt$. If $H$ is complete then so is $H_{\neg \neg}$ and double negation gives a monad on $\cHeyt$. This is summarised in the following well known theorem.

\begin{thm}\label{thm_booleanization}
Let $H$ be a (complete) Heyting algebra. Then the poset $H_{\neg \neg}$ is a (complete) Boolean algebra called the \emph{Booleanization} of $H$. Furthermore, $H \mapsto H_{\neg \neg}$ extends to a functor
\begin{displaymath}
B\colon \Heyt \to \Bool
\end{displaymath}
which is left adjoint to the fully faithful inclusion $\Bool \to \Heyt$. The double negation $H \to H_{\neg \neg}$ is the unit of this adjunction. Moreover, the functor $B$ restricted to $\cHeyt$ gives a left adjoint to the fully faithful inclusion $\cBool \to \cHeyt$.
\end{thm}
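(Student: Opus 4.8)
The plan is to verify the claims in a sequence of elementary steps, each of which is a routine check in the theory of Heyting algebras. First I would establish that $H_{\neg\neg}$ carries a Heyting algebra structure: the meet, implication, and the constants $0,1$ are inherited from $H$ (one checks that $\neg\neg(h\wedge h')=\neg\neg h\wedge\neg\neg h'$ for any $h,h'$, and that $\neg\neg(h\rimp h')=\neg\neg h\rimp\neg\neg h'$ when $h'$ is regular, using the standard identities $\neg\neg\neg h=\neg h$ and $h\leq\neg\neg h$), while the join is defined by $l\vee_{H_{\neg\neg}}m:=\neg(\neg l\wedge\neg m)$ as in the Remark preceding the statement. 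Checking that this is indeed a join in the subposet of regular elements, that distributivity holds, and that the universal property of implication survives, is a finite sequence of inequalities manipulated with the adjunction defining $\rimp$. Then one observes that every element of $H_{\neg\neg}$ is regular by construction (it is of the form $\neg h$), so by condition (1) in the definition $H_{\neg\neg}$ is a Boolean algebra. In the complete case, arbitrary meets are again inherited from $H$, and arbitrary joins in $H_{\neg\neg}$ are computed as $\neg\neg$ of the join in $H$ (equivalently, as $\neg$ of the meet of the negations); completeness of $H$ then gives completeness of $H_{\neg\neg}$.

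Next I would treat functoriality. Given a morphism $f\colon H\to H'$ of Heyting algebras, one checks that $f$ preserves negation\textemdash $f(\neg h)=f(h\rimp 0)=f(h)\rimp 0=\neg f(h)$\textemdash and hence preserves regularity and double negation, so $f$ restricts to a map $B(f)\colon H_{\neg\neg}\to H'_{\neg\neg}$. This $B(f)$ preserves $\wedge$, $\rimp$, $0$, $1$ automatically since these agree with the operations in $H,H'$; it preserves the modified join $\vee_{H_{\neg\neg}}$ because $f$ preserves $\wedge$ and $\neg$. Functoriality ($B(\id)=\id$, $B(g\circ f)=B(g)\circ B(f)$) is immediate. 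In the complete case, if $f$ preserves arbitrary joins then $B(f)$ preserves arbitrary joins in $H_{\neg\neg}$, because the latter are obtained from joins in $H$ by applying $\neg\neg$, which $f$ commutes with.

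Finally I would verify the adjunction. The inclusion $i\colon\Bool\to\Heyt$ is fully faithful since a morphism of Boolean algebras is by definition just a morphism of Heyting algebras, and a Boolean algebra equals its own Booleanization. For the unit $\eta_H=\neg\neg\colon H\to i(H_{\neg\neg})$, one checks first that $\eta_H$ is a Heyting algebra morphism: it preserves $\wedge$, $0$, $1$ by the identities above, it preserves the join into $H_{\neg\neg}$ by definition of $\vee_{H_{\neg\neg}}$, and it preserves implication using $\neg\neg(h\rimp h')=\neg\neg h\rimp\neg\neg h'$. Then, given any Boolean algebra $B$ and any morphism $g\colon H\to i(B)$, one shows there is a unique morphism $\bar g\colon H_{\neg\neg}\to B$ with $\bar g\circ\eta_H=g$: uniqueness is forced because $\eta_H$ is surjective onto $H_{\neg\neg}$ (every regular element is $\neg\neg h$ for some, indeed for itself as $h$), and existence follows by setting $\bar g$ to be the restriction of $g$, using that $g(\neg\neg h)=\neg\neg g(h)=g(h)$ since $g(h)$ is already regular in $B$. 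The complete case is identical once one adds that $\bar g$ preserves arbitrary joins whenever $g$ does. I expect the main obstacle, such as it is, to be bookkeeping around the fact that $H_{\neg\neg}$ is \emph{not} a Heyting subalgebra of $H$\textemdash the join is genuinely different\textemdash so one must be careful to distinguish $\vee$ from $\vee_{H_{\neg\neg}}$ at every step and confirm that the unit $\eta_H$ lands in the right structure; all of this is classical and the details may be found in Section~1 of \cite{StoneSpaces}.
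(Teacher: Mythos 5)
The paper does not actually prove this theorem: it is stated as ``well known'' and the proof is explicitly deferred to Section~1 of \cite{StoneSpaces}, so there is no in-paper argument to compare against. Your outline is a correct rendering of the standard proof and would serve as an adequate expansion of the citation. Two small points of care. First, to show the unit $\eta_H=\neg\neg$ preserves implication you need the identity $\neg\neg(h\rimp h')=\neg\neg h\rimp\neg\neg h'$ for \emph{arbitrary} $h,h'$ (this is exactly what the paper later isolates as Remark~\ref{rem_tech} and uses in the proof that $\phi$ preserves implication); you only assert the case where $h'$ is regular. The general case does hold and follows from the restricted one together with $\neg\neg(h\rimp h')=\neg\neg(h\rimp\neg\neg h')$, but you should say so, since the restricted version alone only gives that $H_{\neg\neg}$ is closed under implication, not that $\eta_H$ preserves it. Second, your verification of the universal property is right but leans on the fact that in a Boolean algebra $B$ the De Morgan identity $\neg(\neg a\wedge\neg b)=a\vee b$ holds, so that the restriction $\bar g$ carries the modified join $\vee_{H_{\neg\neg}}$ to the ordinary join of $B$; it is worth making that single computation explicit, as it is the one place where the failure of $H_{\neg\neg}$ to be a Heyting subalgebra of $H$ could bite. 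With those two remarks supplied, the sketch is complete and is the same argument one finds in the cited reference.
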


\begin{rem}\label{rem_tech}
A particular consequence of the theorem is that for any Heyting algebra $H$ the double negation operation $\neg \neg$ preserves implication; this is part of the statement that the unit $H\to H_{\neg \neg}$ is a morphism of Heyting algebras.
\end{rem}

\section{Any complete Boolean algebra is a Bousfield lattice}

In this section we construct, starting from a complete Heyting algebra $L$, an algebraic tensor triangulated category $\sfT_L$ whose Bousfield lattice is the Booleanization of $L$. Hence if $L$ is a complete Boolean algebra then the Bousfield lattice of $\sfT_L$ is precisely $L$.

We fix a complete Heyting algebra $L$ and a field $k$. Denote by $\sfD(k)$ the unbounded derived category of all $k$-vector spaces. We will denote the suspension in $\sfD(k)$ by $\Sigma$, and also use this to denote suspension in the other triangulated categories which occur.

We define a triangulated category $\sfT_L$ by
\begin{displaymath}
\sfT_L = \prod_{l\in L\setminus \{0_L\}} \sfD(k_l)
\end{displaymath}
where $k_l = k$ for all $l\in L\setminus 0_L$ (the subscript is merely to keep track of indices) and the product is taken in the category of additive categories. Let us unwind what this means and fix some notation. For a $k$-vector space $V$ we use $\S^i V_l$ to denote the object of $\sfT_L$ which is $\S^i V$ in the $l$th position and zero elsewhere. With this convention an object $(X_l)_{l\in L\setminus \{0\}}$ of $\sfT_L$ can be thought of as $\coprod_l X_l$ and we will use this notation. The sense in which one makes this precise is that objects of $\sfT_L$ are the same as $L$-graded objects of $\sfD(k)$ (where $L$ is just viewed as a set) and we can represent such an object either as the sequence of its components, which is the natural description as a product of additive categories, or as the direct sum (in the derived category of vector spaces) of its graded pieces. There is no ambiguity as the constant sequence $(k_l)_{l\in L\setminus \{0\}}$ is both the product and the coproduct of the $k_l$ in $\sfT_L$ (this is a subtle psychological point, but one can convince oneself by thinking about the analogous situation for graded vector spaces).

The triangulated structure is just given levelwise by the usual triangulated structure on the unbounded derived category of $k$-vector spaces. It is easily seen that $\sfT_L$ is compactly generated, pure-semisimple, and has a stable combinatorial model (see \cite{KrTele}*{Section~2.4} for details on pure-semisimplicity and \cite{BarwickModels}*{(1.21.2)} for the definition of combinatorial model category). For convenience we will often denote the zero object $0$ of $\sfT_L$ by $k_{0_L}$ and think of it as the ``generator corresponding to $0_L$''  (thus we do not worry about excluding $0_L$ when indexing generators of $\sfT_L$ over $L$). 

%\begin{rem}\label{rem_product}
%Although this may at first be counterintuitive, we do wish to take the product of the $\sfD(k)$ in the category of additive categories when defining $\sfT_L$ rather than the coproduct. An %infinite direct sum of copies of $\sfD(k)$ would not have all small coproducts, since any object would be non-zero at only finitely many components.
%\end{rem}

We now use the lattice structure of $L$ to define an exact symmetric monoidal structure on $\sfT_L$. For $l,l' \in L$ we set
\begin{displaymath}
k_l \otimes k_{l'} = k_{l\wedge l'}
\end{displaymath}
where we use the identification $k_{0_L} = 0$. This extends to a monoidal structure on $\sfT_L$ which is exact and coproduct preserving in each variable and with unit object $k_1$. Indeed, every object of $\sfT_L$ is a sum of suspensions of the $k_l$ for $l\in L$ so the rule above, together with the usual tensor product on $\sfD(k)$, determines an (essentially unique) exact coproduct preserving extension.

\begin{rem}\label{rem_monoidal}
In fact we did not use the full lattice structure in order to concoct $\sfT_L$, we only really needed the monoid $(L, \wedge, 1)$. Given any monoid $M$ we can define a tensor triangulated category $\sfT_M$ exactly as above, with all of the same good properties. We shall exploit this, for some simple minded monoids, in Sections \ref{sec:not} and \ref{sec:poor}.
\end{rem}

We now define the object which will occupy our interest for the remainder of this note.

\begin{defn}
Let $\sfT$ be a compactly generated tensor triangulated category. Given an object $X\in \sfT$ its \emph{Bousfield class} is the full subcategory
\begin{displaymath}
A(X) = \{Y\in \sfT \; \vert \; X\otimes Y = 0\}.
\end{displaymath}
We note that $A(X)$ is triangulated, closed under tensoring with arbitrary objects, and closed under retracts and arbitrary coproducts, i.e.\ it is a \emph{localizing tensor ideal}.

We denote by $A(\sfT)$ the collection of all Bousfield classes of $\sfT$ and call it the \emph{Bousfield lattice} of $\sfT$. This is naturally a poset under reverse inclusion and, in fact, forms a complete lattice: the join of the Bousfield classes $\{A(X_i)\; \vert \; i\in I\}$ is given by $A(\coprod_i X_i)$.
\end{defn}

\begin{notation}
Throughout we use $\wedge$ and $\vee$ to denote the meet and join of the Bousfield lattice; this overlaps with our notation internal to $L$, but it should always be clear from the context which usage is being invoked.
\end{notation}

We will also need a small amount of notation for dealing with triangulated categories.

\begin{notation}
Let $\sfT$ be a compactly generated triangulated category and let $\sfX$ be a class of objects of $\sfT$. We denote by $\loc(X)$ the smallest localizing subcategory containing $\sfX$, i.e.\ the smallest subcategory of $\sfT$ which contains $\sfX$ and is closed under suspensions, cones, and arbitrary coproducts.
\end{notation}

Let us return to studying $\sfT_L$. We recall from \cite{IKBousfield} that $A(\sfT_L)$ is a set rather than a proper class (this will become clear through explicit computation in any case). Thus there are no set theoretic issues with any construction using Bousfield classes.

Note that, for any object $X$ of $\sfT_L$, the class $A(X)$ is determined by the $k_l$ it contains; this is immediate as Bousfield classes are closed under summands and every object of $\sfT_L$ is a sum of suspensions of the $k_l$.

The first real observation we make is that every Bousfield class in $\sfT_L$ comes from one of the standard generators $k_l$.

\begin{lem}
Let $X$ be an object of $\sfT_L$. Then there exists an $l\in L$ such that 
\begin{displaymath}
A(X) = A(k_l).
\end{displaymath}
\end{lem}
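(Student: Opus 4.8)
The plan is to reduce the Bousfield class of an arbitrary object $X$ to that of a single standard generator by exploiting the decomposition of $X$ into its graded pieces. As noted just before the statement, $A(X)$ is determined by which $k_l$ it contains, and since $X$ is (up to suspension and summands) the coproduct $\coprod_{l \in S} k_l$ for some subset $S \subseteq L$, we have $X \otimes k_m = 0$ if and only if $l \wedge m = 0_L$ for every $l \in S$, i.e.\ $k_m \in A(X)$ iff $m \wedge l = 0_L$ for all $l \in S$. So first I would set $s := \bigvee_{l \in S} l$, the join taken in $L$, and propose that $A(X) = A(k_s)$.

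The key computation is then: $k_m \in A(k_s)$ iff $m \wedge s = 0_L$ iff $m \wedge (\bigvee_{l\in S} l) = 0_L$. Using the frame distributivity law for the complete Heyting algebra $L$ — namely $m \wedge \bigvee_{l\in S} l = \bigvee_{l\in S}(m \wedge l)$ — this join is $0_L$ precisely when $m \wedge l = 0_L$ for every $l \in S$ (here one uses that a join of elements of $L$ is $0_L$ iff each element is $0_L$, which holds since $0_L$ is the bottom). That last condition is exactly the criterion for $k_m \in A(X)$ derived in the previous paragraph. Hence $A(X)$ and $A(k_s)$ contain the same standard generators, and therefore coincide.

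The main obstacle — really the only subtle point — is the correct handling of infinite joins: one must remember that $L$ is a \emph{complete} Heyting algebra, equivalently a frame, so that the infinitary distributive law is available, and that $S$ may be an arbitrary (infinite) subset of $L \setminus \{0_L\}$ rather than a finite one. Finite distributivity would not suffice. The rest is bookkeeping: translating between the ``sequence of components'' and ``coproduct of graded pieces'' descriptions of objects of $\sfT_L$, and recalling that $\otimes$ commutes with arbitrary coproducts in each variable so that $k_m \otimes \coprod_{l\in S} \S^{i_l} k_l = \coprod_{l\in S} \S^{i_l} k_{m\wedge l}$, with a summand vanishing exactly when $m \wedge l = 0_L$.
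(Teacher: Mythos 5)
Your proposal is correct and follows essentially the same route as the paper: decompose $X$ (up to suspensions) as a coproduct of standard generators, take the join $l$ of the occurring indices, and use the frame distributivity law $m \wedge \bigvee_i l_i = \bigvee_i (m\wedge l_i)$ to identify the generators killed by $X$ with those killed by $k_l$. The paper carries this out as a single chain of equalities of localizing subcategories, with the infinitary distributive law used implicitly at the step you highlight explicitly; there is no substantive difference.
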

\begin{proof}
We can write $X$ as a coproduct of suspensions of the $k_m$ for $m\in L$ and, since suspension does not change Bousfield classes, we may without loss of generality take $X\ \iso \coprod_i k_{l_i}$ where $l_i\in L$ are lattice elements indexed by some set $I$. Let us write $l$ for the join $\vee_i l_i$ of the $l_i$. Then
\begin{align*}
A(X) &= A(\coprod_i k_{l_i}) \\
&= \loc( k_m \; \vert \;  (\coprod_i k_{l_i}) \otimes k_m = 0) \\
&= \loc( k_m \; \vert \;  \coprod_i (k_{l_i} \otimes k_m) = 0 ) \\
&= \loc( k_m \; \vert \;  \coprod_i (k_{l_i\wedge m}) = 0 ) \\
&= \loc( k_m \; \vert \;  l_i \wedge m = 0_L \;\; \forall i\in I ) \\
&= \loc( k_m \; \vert \; l \wedge m = 0_L ) \\
&= A(k_l).
\end{align*}
\end{proof}

\begin{rem}\label{rem_join}
We proved a little more than we stated in the Lemma and we wish to record it for use later. Namely, we showed that if $\{l_i\}_{i\in I}$ is a set of elements of $L$ then $A(\coprod_i k_{l_i}) = A(k_{\vee_i l_i})$.
\end{rem}

Next we shall describe the meet on $A(\sfT_L)$. By the last lemma it is sufficient to do this for the Bousfield classes of the $k_l$ with $l\in L$. Given objects $X$ and $Y$ of $\sfT_L$ we set 
\begin{displaymath}
A(X)\otimes A(Y) := A(X\otimes Y).
\end{displaymath}
Observe that if $A(X)\leq A(X')$ and $A(Y)\leq A(Y')$ then there is an inequality $A(X) \otimes A(Y) \leq A(X') \otimes A(Y')$.

\begin{lem}\label{lem_meet}
Let $l$ and $l'$ be elements of $L$. Then the meet of $A(k_{l})$ and $A(k_{l'})$ is given by $A(k_l)\otimes A(k_{l'}) = A(k_{l\wedge l'})$.
\end{lem}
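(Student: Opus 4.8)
The plan is to identify the meet of two Bousfield classes with a concrete ideal and then verify it is computed by $A(k_{l\wedge l'})$. Recall that in the Bousfield lattice ordered by reverse inclusion, the meet $A(k_l) \wedge A(k_{l'})$ is the \emph{largest} Bousfield class contained in both $A(k_l)$ and $A(k_{l'})$ (largest as a subcategory). Since every Bousfield class of $\sfT_L$ is of the form $A(k_m)$ for some $m \in L$ by the previous lemma, and since $A(k_m) \subseteq A(k_n)$ if and only if $n \wedge m = 0_L$ forces $m \leq \neg n$ in $L$ (as $\neg n$ is the largest element meeting $n$ to $0$), I first want to translate the order on Bousfield classes into an order statement in the Heyting algebra $L$. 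Concretely, $A(k_m) \leq A(k_n)$ in $A(\sfT_L)$, i.e.\ $A(k_n) \subseteq A(k_m)$, holds precisely when $\{x : x \wedge n = 0\} \subseteq \{x : x \wedge m = 0\}$, which (testing on $x = \neg n$) is equivalent to $\neg n \wedge m = 0$, i.e.\ $m \leq \neg\neg n$.

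With that dictionary in hand, the core computation is to show $A(k_l) \otimes A(k_{l'}) = A(k_{l \wedge l'})$ is genuinely the meet. First I would check $A(k_{l\wedge l'})$ is a lower bound: since $(l \wedge l') \wedge l = (l\wedge l')$, hmm, that is not $0$ in general, so I should instead argue directly that $A(k_l \otimes k_{l'}) = A(k_{l\wedge l'})$ sits below both $A(k_l)$ and $A(k_{l'})$. Indeed $A(k_{l\wedge l'}) \leq A(k_l)$ because tensoring the defining condition: if $k_{l} \otimes Y = 0$ then $k_{l\wedge l'}\otimes Y = k_{l'} \otimes (k_l \otimes Y) = 0$, so $A(k_l) \subseteq A(k_{l\wedge l'})$, which is exactly $A(k_{l\wedge l'}) \leq A(k_l)$ in the reverse-inclusion order; symmetrically for $l'$. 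So $A(k_{l\wedge l'})$ is a common lower bound. Then for maximality, suppose $A(k_m)$ is another common lower bound, so $A(k_l) \subseteq A(k_m)$ and $A(k_{l'}) \subseteq A(k_m)$. Unwinding via the dictionary above, $A(k_l) \subseteq A(k_m)$ means $m \leq \neg\neg l$ and similarly $m \leq \neg\neg l'$, hence $m \leq \neg\neg l \wedge \neg\neg l'$. Using that $\neg$ (equivalently $\neg\neg$) behaves well under meets in a Heyting algebra\textemdash specifically $\neg\neg(l \wedge l') = \neg\neg l \wedge \neg\neg l'$, which follows from $\neg\neg$ preserving finite meets (it preserves the meet as it is right adjoint-like; more elementarily, one uses that $\neg\neg$ preserves implication, per Remark~\ref{rem_tech}, and $l\wedge l' = \neg(l \rimp \neg l')$ combined with standard identities)\textemdash we get $m \leq \neg\neg(l\wedge l')$, which by the dictionary is exactly $A(k_{l\wedge l'}) \subseteq A(k_m)$, i.e.\ $A(k_m) \leq A(k_{l\wedge l'})$. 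This shows $A(k_{l\wedge l'})$ is the greatest lower bound.

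An alternative, perhaps cleaner route avoids the Heyting-algebra manipulations: compute $A(k_l)\wedge A(k_{l'})$ directly as a subcategory. A common lower bound $A(k_m)$ must be contained in $A(k_l) \cap A(k_{l'})$, and conversely any Bousfield class contained in both is $\leq$ both. So it suffices to show that $A(k_l) \cap A(k_{l'})$, while not obviously a Bousfield class itself, contains $A(k_{l\wedge l'})$ and is contained in it\textemdash wait, containment the other way needs care. Actually the safest formulation: $A(k_{l\wedge l'}) \subseteq A(k_l) \cap A(k_{l'})$ is the lower-bound statement just proven, and for any common lower bound $A(k_m) \subseteq A(k_l)\cap A(k_{l'})$ we must show $A(k_m) \subseteq A(k_{l\wedge l'})$; testing membership of $k_n$ in these classes, $k_n \in A(k_m)$ iff $n \wedge m = 0$, and the hypothesis gives $k_{\neg m} \in A(k_m) \subseteq A(k_l)$ so $\neg m \wedge l = 0$ hence $l \leq \neg\neg m$, similarly $l' \leq \neg\neg m$, so $l \wedge l' \leq \neg\neg m$ and therefore $(l\wedge l') \wedge \neg m \leq \neg\neg m \wedge \neg m = 0$, giving $k_{\neg m} \in A(k_{l\wedge l'})$; more generally $n \wedge m = 0 \Rightarrow n \leq \neg m \Rightarrow n \wedge (l\wedge l') = 0$, so $A(k_m) \subseteq A(k_{l\wedge l'})$ as desired.

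The main obstacle I anticipate is the bookkeeping around the direction of the order (reverse inclusion) and making sure the "largest common lower bound" is correctly matched to the right inclusion of subcategories; the underlying algebraic input\textemdash that $\neg m \wedge l = 0$ iff $l \leq \neg\neg m$, and the interaction of $\neg\neg$ with binary meets\textemdash is standard Heyting algebra and should be quotable or a one-line check. The tensor-ideal manipulations ($k_l \otimes Y = 0 \Rightarrow k_{l\wedge l'}\otimes Y = 0$) are immediate from $k_{l\wedge l'} = k_l \otimes k_{l'}$ and associativity of $\otimes$. So I expect the proof to be short, with the only genuine content being the translation between the combinatorics of Bousfield classes in $\sfT_L$ and the order-theoretic structure of $L$.
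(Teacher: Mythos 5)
Your first argument is correct, but it takes a genuinely different route from the paper. You prove maximality by translating the order on Bousfield classes into the Heyting algebra: $A(k_m)\leq A(k_n)$ iff $m\leq \neg\neg n$, and then invoke the fact that double negation preserves binary meets, $\neg\neg(l\wedge l') = \neg\neg l\wedge \neg\neg l'$ (quotable from Theorem~\ref{thm_booleanization}, since the unit $L\to L_{\neg\neg}$ is a Heyting algebra morphism and the meet on $L_{\neg\neg}$ is that of $L$). The paper instead avoids negation entirely: having observed that $A(X)\otimes A(Y):=A(X\otimes Y)$ is monotone in each variable, it notes that any common lower bound $A(k_x)$ satisfies $A(k_x)=A(k_{x\wedge x})=A(k_x)\otimes A(k_x)\leq A(k_l)\otimes A(k_{l'})=A(k_{l\wedge l'})$, using only idempotency of $\wedge$. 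The paper's argument is shorter and is the standard ``idempotent objects compute meets'' argument, portable to any Bousfield lattice in which every class is represented by a $\otimes$-idempotent; yours buys an explicit order-theoretic dictionary (which is essentially re-derived later in the paper for Lemma~\ref{lem_negation} and the implication computations), at the cost of importing a Heyting-algebra fact. You also explicitly verify the lower-bound half, which the paper leaves implicit. One small slip: $l\wedge l'\neq \neg(l\rimp\neg l')$ in a general Heyting algebra (the right-hand side is $\neg\neg(l\wedge l')$); but since you only use the standard fact that $\neg\neg$ preserves finite meets, this parenthetical does not affect the argument.

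Your ``alternative, cleaner route,'' however, is broken by exactly the order-reversal bookkeeping you were worried about. Under reverse inclusion a common lower bound $A(k_m)$ satisfies $A(k_l)\cup A(k_{l'})\subseteq A(k_m)$, not $A(k_m)\subseteq A(k_l)\cap A(k_{l'})$ (the latter intersection is $A(k_{l\vee l'})$, the \emph{join}). Likewise the lower-bound statement you actually proved is $A(k_l)\cup A(k_{l'})\subseteq A(k_{l\wedge l'})$; the containment $A(k_{l\wedge l'})\subseteq A(k_l)\cap A(k_{l'})$ that you assert there is false in general (e.g.\ if $l\wedge l'=0_L$ then $A(k_{l\wedge l'})=\sfT_L$). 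Consequently the computation in that paragraph establishes ``if $A(k_m)\subseteq A(k_l)\cap A(k_{l'})$ then $A(k_m)\subseteq A(k_{l\wedge l'})$,'' which is the statement that every common \emph{upper} bound of the two classes is an upper bound of $A(k_{l\wedge l'})$ \textemdash{} true but trivial (it already follows from the lower-bound half) and not the maximality needed for the meet. Strike that paragraph and rely on your first argument, or replace it with the paper's idempotency argument.
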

\begin{proof}
We need to show that $A(k_{l\wedge l'})$ is the greatest lower bound for $A(k_l)$ and $A(k_{l'})$. So suppose $A(k_x) \leq A(k_l)$ and $A(k_x) \leq A(k_{l'})$. Then
\begin{displaymath}
A(k_x) = A(k_{x\wedge x}) = A(k_x)\otimes A(k_x) \leq A(k_l) \otimes A(k_{l'}) = A(k_{l\wedge l'})
\end{displaymath}
which shows that $A(k_{l\wedge l'})$ is the greatest lower bound for $A(k_l)$ and $A(k_{l'})$.
\end{proof}

It follows easily that $A(\sfT_L)$ is a frame (aka a complete Heyting algebra).

\begin{lem}\label{lem_frame}
The Bousfield lattice $A(\sfT_L)$ is a frame, i.e.\ finite meets distribute over infinite joins.
\end{lem}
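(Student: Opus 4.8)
The plan is to reduce everything to the standard generators $k_l$ and then transport the frame condition from $L$ itself. By the first lemma of this section every Bousfield class in $\sfT_L$ has the form $A(k_l)$ for some $l\in L$, so it suffices to verify the distributivity law
\begin{displaymath}
A(k_l) \wedge \Bigl(\bigvee_{i\in I} A(k_{m_i})\Bigr) = \bigvee_{i\in I}\bigl(A(k_l) \wedge A(k_{m_i})\bigr)
\end{displaymath}
for a single element $l\in L$ and an arbitrary family $\{m_i\}_{i\in I}$ of elements of $L$. (Recall from \cite{IKBousfield} that $A(\sfT_L)$ is a genuine set, so testing against set-indexed families of Bousfield classes detects the frame condition.)

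Next I would compute the two sides using the results already in hand. For the left-hand side, the description of joins in the Bousfield lattice together with Remark~\ref{rem_join} gives $\bigvee_i A(k_{m_i}) = A(\coprod_i k_{m_i}) = A(k_{\vee_i m_i})$, where $\vee_i m_i$ is the join taken inside $L$; Lemma~\ref{lem_meet} then yields
\begin{displaymath}
A(k_l) \wedge \Bigl(\bigvee_i A(k_{m_i})\Bigr) = A(k_l)\wedge A(k_{\vee_i m_i}) = A(k_{l\wedge (\vee_i m_i)}).
\end{displaymath}
For the right-hand side, Lemma~\ref{lem_meet} gives $A(k_l)\wedge A(k_{m_i}) = A(k_{l\wedge m_i})$ for each $i$, and a second application of Remark~\ref{rem_join} produces $\bigvee_i A(k_{l\wedge m_i}) = A(k_{\vee_i (l\wedge m_i)})$.

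Finally I would invoke the fact, recalled in Section~2, that a complete Heyting algebra is the same thing as a frame, so that $l\wedge (\bigvee_i m_i) = \bigvee_i (l\wedge m_i)$ holds in $L$. Consequently $k_{l\wedge (\vee_i m_i)}$ and $k_{\vee_i(l\wedge m_i)}$ are literally the same object of $\sfT_L$, and the two expressions computed above coincide. There is no real obstacle here; the only point requiring care is not to conflate the two meanings of $\wedge$ and $\vee$ — those internal to $L$ versus those in the Bousfield lattice — and the whole content of the statement is that the correspondence $l\leftrightarrow A(k_l)$ set up by the preceding lemmas carries the frame structure of $L$ over to $A(\sfT_L)$.
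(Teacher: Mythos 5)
Your proof is correct: every step is covered by results already established in the paper (the description of joins as $A(\coprod_i X_i)$, Remark~\ref{rem_join}, Lemma~\ref{lem_meet}) together with the standard fact, recalled in Section~2, that a complete Heyting algebra is a frame. You do, however, take a genuinely different route from the paper. You translate both sides of the distributivity law down into lattice elements of $L$, obtaining $A(k_{l\wedge(\vee_i m_i)})$ on one side and $A(k_{\vee_i(l\wedge m_i)})$ on the other, and then import the frame axiom of $L$ itself to identify the two indices. The paper instead never leaves the category: it rewrites the meet as a tensor product of Bousfield classes via Lemma~\ref{lem_meet}, uses the fact that $\otimes$ preserves coproducts in each variable to move $k_l\otimes(-)$ inside $\coprod_i k_{m_i}$, and reads off the join again; the distributivity of $L$ is never invoked. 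The paper's argument is therefore slightly more portable: it really proves that the Bousfield lattice is a frame whenever meets of Bousfield classes are computed by the tensor product, a shape of argument that survives in settings (such as the monoid-indexed categories of Remark~\ref{rem_monoidal}) where there is no ambient frame to appeal to. Your argument buys a very concrete identification of both sides with explicit generators $k_{(-)}$, at the cost of leaning on the hypothesis that $L$ is a frame. Both are complete proofs of the stated lemma.
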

\begin{proof}
Let $l$ be an element of $L$ and $\{m_i\}_{i\in I}$ a set of elements of $L$ indexed by a set~$I$. Then
\begin{align*}
A(k_l) \wedge (\bigvee_i A(k_{m_i})) & = A(k_l) \otimes A(\coprod_i k_{m_i}) \\
&= A(k_l \otimes (\coprod_i k_{m_i})) \\
&= A(\coprod_i (k_l\otimes k_{m_i})) \\
&= \bigvee_i A(k_l \otimes k_{m_i}) \\
&= \bigvee_i (A(k_l) \wedge A(k_{m_i}))
\end{align*}
where we have used Lemma \ref{lem_meet} for the first and last equalities. %, and Remark \ref{rem_join} for the first and third.
\end{proof}

We can then define the implication operation for the classes corresponding to $l,m\in L$ by
\begin{displaymath}
(A(k_l) \rimp A(k_{m})) = \bigvee_{A(k_x)\wedge A(k_l) \leq A(k_m)} A(k_x).
\end{displaymath}
In fact $A(\sfT_L)$ is a complete Boolean algebra. Before proving this let us give a concrete description of the negation operation on the Bousfield lattice. This negation operator is a general form of the one originally considered by Bousfield \cite{BousfieldLSH} (also see \cite{HPBousfield}).

\begin{lem}\label{lem_negation}
For $l\in L$ there is an equality
\begin{displaymath}
\neg A(k_l) = A(\coprod_{k_m \in A(k_l)} k_m) = A(k_{\neg l}).
\end{displaymath}
\end{lem}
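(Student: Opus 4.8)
The plan is to prove the two claimed equalities separately, working with the concrete description of $\sfT_L$ and reducing everything to the standard generators $k_m$. The middle object $A(\coprod_{k_m\in A(k_l)} k_m)$ is designed to be the literal supremum of all Bousfield classes disjoint from $A(k_l)$: recall that $\neg A(k_l)$, computed in the frame $A(\sfT_L)$, is the join of all $A(k_x)$ with $A(k_x)\wedge A(k_l) = A(k_{0_L})$ (the zero class), and since every object is a sum of suspensions of the $k_m$, it suffices to take the join over the standard generators $k_m$ lying in $\neg A(k_l)$, i.e.\ those $k_m$ with $A(k_m)\wedge A(k_l) = 0$.

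First I would establish the left-hand equality $\neg A(k_l) = A(\coprod_{k_m\in A(k_l)} k_m)$. Using Lemma~\ref{lem_meet}, the condition $A(k_m)\wedge A(k_l) = A(k_{0_L})$ becomes $A(k_{m\wedge l}) = A(k_{0_L})$, and a direct check with the description of $A(k_m)$ as the localizing subcategory generated by $\{k_n \mid n\wedge m = 0_L\}$ shows this holds precisely when $m\wedge l = 0_L$, i.e.\ precisely when $k_m \in A(k_l)$. So the generators appearing in the defining join for $\neg A(k_l)$ are exactly the $k_m\in A(k_l)$, and by Remark~\ref{rem_join} their join is $A(\coprod_{k_m\in A(k_l)} k_m)$.

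Next I would prove the right-hand equality $A(\coprod_{k_m\in A(k_l)} k_m) = A(k_{\neg l})$. By Remark~\ref{rem_join} the left side equals $A(k_{\bigvee\{m \mid m\wedge l = 0_L\}})$. Now $\neg l = (l\rimp 0_L)$ is by definition the join of $\{x\in L \mid x\wedge l \leq 0_L\} = \{x\in L \mid x\wedge l = 0_L\}$, so the indexing element is exactly $\neg l$, giving $A(k_{\bigvee\{m \mid m\wedge l = 0_L\}}) = A(k_{\neg l})$. This part is essentially bookkeeping once the frame/implication formalism from the preceding lemmas is in place.

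The only real subtlety, and the step I expect to require the most care, is the first reduction: justifying that it is legitimate to compute the frame-negation $\neg A(k_l)$ as a join over \emph{standard generators} $k_m$ rather than over all objects $X$ with $A(X)\wedge A(k_l) = 0$. This is handled by the observation (made just before Lemma~\ref{lem_meet}, and used implicitly throughout) that $A(X)$ is determined by which $k_l$ it contains, together with Remark~\ref{rem_join} which lets one collapse any coproduct of generators back to a single generator; so any $A(X)\leq \neg A(k_l)$ is dominated by $A(k_m)$ for a suitable $m$ with $k_m\in A(k_l)$. Once this is cleanly stated the rest falls out from Lemma~\ref{lem_meet} and the definition of $\neg$ in $L$.
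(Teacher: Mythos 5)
Your proposal is correct and follows essentially the same route as the paper: identify the index set of the join defining $\neg A(k_l)$ with $\{x\in L \mid x\wedge l = 0_L\}$ via Lemma~\ref{lem_meet}, recognise this as the index set defining $\neg l = (l\rimp 0_L)$ in the frame $L$, and collapse the join of generators to a single class via Remark~\ref{rem_join}. The extra care you take over reducing the join to standard generators is already built into the paper's definition of implication on $A(\sfT_L)$ (which is stated only for classes of the form $A(k_x)$, justified by the earlier lemma that every Bousfield class has this form), so no new argument is needed there.
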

\begin{proof}
Recall that $\neg A(k_l)$ is, by definition, $A(k_l) \rimp A(0)$ where $A(0) = \sfT$. Explicitly we have
\begin{displaymath}
\neg A(k_l) = \bigvee_{A(k_x)\wedge A(k_l) \leq A(0)} A(k_x).
\end{displaymath}
Now the lattice elements representing Bousfield classes in the indexing set in the wedge occurring above can be rewritten, using Lemma \ref{lem_meet}, as
\begin{displaymath}
\{x\in L \; \vert \; A(k_{x\wedge l}) \leq A(0) \} = \{x\in L \; \vert \; k_{x\wedge l} = 0\} = \{x\in L \; \vert \; x\wedge l = 0_L\}
\end{displaymath}
which is precisely the indexing set of the join occurring in the explicit definition of $(l\rimp 0_L)$. Together with Remark \ref{rem_join} this gives the claimed equalities.
\end{proof}

\begin{prop}\label{prop_boolean}
The Bousfield lattice of $\sfT_L$ is a complete Boolean algebra.
\end{prop}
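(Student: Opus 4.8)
\section*{Proof proposal}

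The plan is to deduce the statement from the fact, recorded in Lemma~\ref{lem_frame}, that $A(\sfT_L)$ is already a complete Heyting algebra: by the definition of a complete Boolean algebra it then suffices to check that every element of $A(\sfT_L)$ is regular. By the first Lemma every Bousfield class of $\sfT_L$ has the form $A(k_l)$ for some $l\in L$, so the whole content reduces to verifying $\neg\neg A(k_l) = A(k_l)$ for each $l\in L$.

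To handle this, I would apply Lemma~\ref{lem_negation} twice to compute $\neg\neg A(k_l) = \neg A(k_{\neg l}) = A(k_{\neg\neg l})$, so that the claim becomes the equality of Bousfield classes $A(k_{\neg\neg l}) = A(k_l)$. Here I would invoke the observation made just before the first Lemma: a Bousfield class in $\sfT_L$ is determined by the standard generators $k_m$ it contains, and $k_m\in A(k_a)$ if and only if $k_{a\wedge m}=0$, i.e.\ $a\wedge m = 0_L$. Thus it is enough to show the equality of subsets of $L$
\begin{displaymath}
\{m\in L \;\vert\; (\neg\neg l)\wedge m = 0_L\} = \{m\in L \;\vert\; l \wedge m = 0_L\}.
\end{displaymath}
The inclusion from left to right is immediate from $l\leq \neg\neg l$. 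For the reverse inclusion, if $l\wedge m = 0_L$ then $m\leq \neg l$ by the defining adjunction for $\rimp$, hence $(\neg\neg l)\wedge m \leq (\neg\neg l)\wedge(\neg l) = 0_L$; both facts used here ($l\leq\neg\neg l$ and $a\wedge\neg a = 0_L$, the latter applied with $a=\neg l$) are elementary Heyting-algebra identities already implicit in Section~2. This gives $A(k_{\neg\neg l}) = A(k_l)$, so every element of $A(\sfT_L)$ is regular and we are done.

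I do not expect a genuine obstacle here; the only point requiring care is to keep the two negation operators, and the two pairs of meet/join operations (those internal to $L$ and those of the Bousfield lattice), clearly separated. An alternative, more structural route would be to note that $l\mapsto A(k_l)$ defines a surjective morphism of complete Heyting algebras $L\to A(\sfT_L)$ which, by Lemma~\ref{lem_negation}, is compatible with negation, and then to identify $A(\sfT_L)$ with the Booleanization $L_{\neg\neg}$ of Theorem~\ref{thm_booleanization}; but the direct regularity computation above is shorter and yields exactly the stated proposition.
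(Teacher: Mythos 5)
Your proof is correct and follows the paper's strategy exactly: both reduce the claim, via Lemma~\ref{lem_frame}, to checking that every $A(k_l)$ is regular, and both deduce this from Lemma~\ref{lem_negation}. The only (harmless) difference is in how the regularity check is executed\textemdash you apply Lemma~\ref{lem_negation} twice to rewrite $\neg\neg A(k_l)$ as $A(k_{\neg\neg l})$ and then verify the purely lattice-theoretic fact that $l$ and $\neg\neg l$ have the same annihilators in $L$, whereas the paper argues the two inequalities directly in the Bousfield lattice using the coproduct description of negation; both routes are valid and yours is, if anything, slightly cleaner.
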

\begin{proof}
We have just seen that $A(\sfT_L)$ is a frame. We will show that every element of $A(\sfT_L)$ is regular i.e.\ check that $\neg \neg A(k_l) = A(k_l)$ for all $l\in L$.

Let $l$ be an element of $L$. Since $l\wedge \neg l = 0_L$ we have $k_l \otimes k_{\neg l} = 0$. In particular, $k_l$ is a summand in $\coprod_{k_m \in A(k_{\neg l})} k_m$ which, by the last lemma gives an object representing $\neg \neg A(k_l)$. Thus $\neg \neg A(k_l) \geq A(k_l)$.

On the other hand suppose $k_w \otimes k_l = 0$ so $k_w\in A(k_l)$, i.e.\ $w\wedge l = 0_L$. We know, by the last lemma, that $\neg A(k_l) \geq A(k_w)$. The double negation is given by
\begin{displaymath}
\neg(\neg A(k_l)) = A(\coprod_{k_x \in \neg A(k_l)} k_x)
\end{displaymath}
where, by definition, each $k_x$ lies in $\neg A(k_l)$ and hence in $A(k_w)$. Thus $k_w \in \neg \neg A(k_l)$ showing $A(k_l) \geq \neg \neg A(k_l)$ and completing the proof.
\end{proof}

We now want to compare the lattice $L$ to $A(\sfT_L)$. Define an assignment $\phi\colon L \to A(\sfT_L)$ by $\phi(l) = A(k_l)$. Our claim is that $\phi$ is a well defined morphism of complete Heyting algebras which identifies $A(\sfT_L)$ with the Booleanization of $L$. It is clear that $\phi$ is well defined and it is surjective as we have noted above that every Bousfield class is of the form $A(k_l)$.

\begin{lem}
The assignment $\phi\colon L \to A(\sfT_L)$ is a morphism of frames.
\end{lem}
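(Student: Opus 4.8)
The plan is to check directly that $\phi$ satisfies the three things required of a frame morphism: it preserves the bottom and top elements, finite meets, and arbitrary joins. All of the substantive work has already been carried out in Lemma~\ref{lem_meet} and in the computation recorded in Remark~\ref{rem_join}, so the proof is essentially an assembly job; the only point requiring care is the order convention (the Bousfield lattice is ordered by \emph{reverse} inclusion) and remembering that a frame morphism must also respect the bounds.

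First I would treat the bounds. The bottom element $0_L$ maps to $A(k_{0_L}) = A(0) = \sfT_L$, which, under reverse inclusion, is the bottom of $A(\sfT_L)$. The top element $1_L$ maps to $A(k_1) = A(\unit)$, and since $\unit \otimes Y = 0$ forces $Y = 0$, this is the zero localizing tensor ideal, i.e.\ the top of $A(\sfT_L)$. Next, preservation of finite meets is immediate from Lemma~\ref{lem_meet}: for $l, l' \in L$,
\begin{displaymath}
\phi(l \wedge l') = A(k_{l\wedge l'}) = A(k_l) \otimes A(k_{l'}) = A(k_l) \wedge A(k_{l'}) = \phi(l) \wedge \phi(l').
\end{displaymath}
In particular $\phi$ is monotone, since meet-preservation already forces $l \leq l' \Rightarrow \phi(l) \leq \phi(l')$.

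It then remains to verify that $\phi$ preserves arbitrary joins. Given a family $\{l_i\}_{i\in I}$ of elements of $L$, the definition of the join in the Bousfield lattice gives $\bigvee_{i} \phi(l_i) = \bigvee_i A(k_{l_i}) = A(\coprod_i k_{l_i})$, and by Remark~\ref{rem_join} this equals $A(k_{\vee_i l_i}) = \phi(\bigvee_i l_i)$. Combining these three checks shows $\phi$ is a morphism of frames.

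I do not anticipate a real obstacle here: the content lies entirely in the earlier lemmas, and this statement is just the bookkeeping that packages them. The genuinely delicate part of the overall comparison between $L$ and $A(\sfT_L)$ — namely that $\phi$ additionally preserves the implication operation, so that it realizes the Booleanization of $L$ rather than merely a frame quotient — is not claimed in this lemma and will be handled separately using the explicit description of negation from Lemma~\ref{lem_negation}.
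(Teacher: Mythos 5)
Your proof is correct and follows essentially the same route as the paper: meets are handled by Lemma~\ref{lem_meet} and joins by Remark~\ref{rem_join}. The only cosmetic differences are that you deduce monotonicity from meet-preservation rather than checking it directly, and you explicitly verify preservation of the bounds, both of which are fine.
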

\begin{proof}
We need to check that $\phi$ is monotone, and preserves finite meets and infinite joins.

Let us first show that $\phi$ is monotone. Suppose $l \leq m$ in $L$. Then for all $x\in L$ we have $x\wedge l \leq x\wedge m$ and so $x\wedge m = 0_L$ implies that $x\wedge l = 0_L$. Hence $A(k_l) \leq A(k_m)$ as required.
%Hence $A(k_l) \supseteq A(k_m)$ i.e., $A(k_l) \leq A(k_m)$ as required.

The map $\phi$ preserves finite meets by Lemma \ref{lem_meet}. In order to show it preserves joins let $\{l_i\}_{i\in I}$ be a set of elements of $L$ with join $l$. We have
\begin{displaymath}
\phi(\bigvee_i l_i) = A(k_{\vee_i l_i}) = A(\coprod_i k_{l_i}) = \bigvee_i A(k_{l_i}) = \bigvee_i \phi(l_i),
\end{displaymath}
where the second equality is Remark \ref{rem_join}.
\end{proof}

The next lemma shows that $\phi$ is in fact a morphism of complete Heyting algebras.

\begin{lem}
The map $\phi$ preserves implication.
\end{lem}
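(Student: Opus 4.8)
The plan is to reduce the claim to an elementary identity in the Heyting algebra $L$, making essential use of the fact, already established in Proposition~\ref{prop_boolean}, that $A(\sfT_L)$ is a complete Boolean algebra. Fix $l,m\in L$. Since every Bousfield class is of the form $A(k_{(-)})$ and $\phi(l)=A(k_l)$, what must be shown is the equality $A(k_{l\rimp m}) = A(k_l)\rimp A(k_m)$. In the Boolean algebra $A(\sfT_L)$ one has the standard identity $a\rimp b=\neg a\vee b$, so combining Lemma~\ref{lem_negation} (which evaluates negations: $\neg A(k_l)=A(k_{\neg l})$) with Remark~\ref{rem_join} (which evaluates joins: $A(k_{\neg l})\vee A(k_m)=A(k_{\neg l\vee m})$), the right-hand side becomes $A(k_{\neg l\vee m})$. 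Hence the whole statement reduces to the equality of Bousfield classes $A(k_{l\rimp m}) = A(k_{\neg l\vee m})$.

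To finish I would first record that for $a,b\in L$ one has $A(k_a)=A(k_b)$ precisely when $\neg a=\neg b$: as already noted, $A(k_a)$ is determined by the generators $k_c$ it contains, and $k_c\in A(k_a)$ holds iff $a\wedge c=0_L$, i.e.\ iff $c\leq\neg a$, so $A(k_a)$ remembers exactly the principal downset of $\neg a$. Thus the remaining equality is equivalent to $\neg(l\rimp m)=\neg(\neg l\vee m)$ in $L$. Both sides compute to $\neg\neg l\wedge\neg m$: the right-hand one by the de Morgan law $\neg(x\vee y)=\neg x\wedge\neg y$, valid in every Heyting algebra, and the left-hand one by the identity $\neg(l\rimp m)=\neg\neg l\wedge\neg m$, whose two inequalities follow respectively from $\neg l\leq l\rimp m$ and $m\leq l\rimp m$ together with antitonicity of $\neg$, and from $l\wedge(l\rimp m)\wedge\neg m\leq m\wedge\neg m=0_L$, which yields $(l\rimp m)\wedge\neg m\leq\neg l$ and hence $\neg\neg l\wedge\neg m\wedge(l\rimp m)=0_L$.

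None of these steps is deep; the only genuine subtlety is that $l\rimp m$ and $\neg l\vee m$ are in general \emph{not} equal in $L$ yet have the same negation (equivalently, the same Bousfield class), so one must be careful to route the argument through negations in $L$ rather than through equalities of the underlying elements\textemdash in particular one must not accidentally assume $L$ itself is Boolean. As an alternative, one can bypass Proposition~\ref{prop_boolean} and argue straight from the universal property of the implication in the frame $A(\sfT_L)$, checking $A(k_x)\wedge A(k_l)\leq A(k_m)\iff A(k_x)\leq A(k_{l\rimp m})$; after applying Lemma~\ref{lem_meet} and translating into $L$ this reduces to the equivalence $l\wedge\neg m\leq\neg x\iff\neg\neg l\wedge\neg m\leq\neg x$, which is again a short Heyting-algebra manipulation resting on $l\leq\neg\neg l$ and $\neg(x\wedge l)=x\rimp\neg l$.
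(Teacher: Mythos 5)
Your proof is correct, but it takes a genuinely different route from the paper's. The paper computes $\phi(l\rimp m)$ by invoking the general fact (Remark~\ref{rem_tech}) that double negation preserves implication, rewriting $\phi(l\rimp m)$ as the join $\bigvee_{x\wedge\neg\neg l\leq\neg\neg m}A(k_x)$, and then comparing index sets with the join defining $\phi(l)\rimp\phi(m)$; the nontrivial inequality is extracted from the chain $\neg m\leq\neg(y\wedge l)$, hence $\neg\neg m\geq\neg\neg y\wedge\neg\neg l$. You instead collapse the target-side implication via the Boolean identity $a\rimp b=\neg a\vee b$ (legitimate, since Proposition~\ref{prop_boolean} precedes this lemma and the join defining $\rimp$ in the frame $A(\sfT_L)$ is the genuine Heyting implication by Lemma~\ref{lem_frame}), evaluate it as $A(k_{\neg l\vee m})$ via Lemma~\ref{lem_negation} and Remark~\ref{rem_join}, and reduce everything to the observation that $A(k_a)$ remembers exactly the principal downset of $\neg a$, so $A(k_a)=A(k_b)$ iff $\neg a=\neg b$. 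The remaining content is then the elementary Heyting identity $\neg(l\rimp m)=\neg\neg l\wedge\neg m=\neg(\neg l\vee m)$, which you verify correctly from first principles. What your approach buys is self-containedness at the lattice level\textemdash you do not need to quote the Booleanization adjunction's compatibility with implication\textemdash and it isolates cleanly the one genuine subtlety (that $l\rimp m$ and $\neg l\vee m$ need not agree in $L$ but have equal negations); what the paper's approach buys is a shorter derivation that leans on standard facts about $H\to H_{\neg\neg}$ already recorded. Your final criterion ``$A(k_a)=A(k_b)$ iff $\neg a=\neg b$'' is also essentially the annihilator argument the paper defers to its concluding proposition, so your version surfaces it earlier.
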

\begin{proof}
To start with let us recall that by Lemma \ref{lem_negation} the map $\phi$ commutes with negation, and by Proposition \ref{prop_boolean} the lattice $A(\sfT_L)$ is a Boolean algebra and so negation is an involution on $A(\sfT_L)$. Thus for all $l\in L$ we have $\phi(\neg \neg l) = \phi(l)$.

We need to check that
\begin{align*}
\phi(l\rimp m) = \phi(\neg \neg(l\rimp m)) = \phi((\neg \neg l) \rimp (\neg \neg m)) &= \phi(\bigvee_{x\wedge (\neg \neg l) \leq (\neg \neg m)} x) \\
&= \bigvee_{x\wedge (\neg \neg l) \leq (\neg \neg m)} A(k_x)
\end{align*}
agrees with
\begin{displaymath}
\phi(l)\rimp \phi(m) = \bigvee_{A(k_x) \wedge A(k_l) \leq A(k_m)} A(k_x).
\end{displaymath}
Here, in order to rewrite $\phi(l\rimp m)$ in the first string of equalities, we have used the observation of Remark~\ref{rem_tech} that double negation preserves implication (and that $L_{\neg \neg}$ is closed under implication in $L$).

One direction is straight forward. Since $\phi$ is order preserving we have $\phi(l\rimp m) \leq (\phi(l)\rimp \phi(m))$; if $x\wedge l \leq m$ then $A(k_x)\wedge A(k_l) \leq A(k_m)$.

On the other hand suppose $A(k_y)\wedge A(k_l) \leq A(k_m)$ i.e.\ for $z\in L$ we have $m\wedge z = 0_L$ implies that $(y\wedge l)\wedge z = 0_L$. In other words there is a containment
\begin{displaymath}
\{z\in L \; \vert \; m\wedge z \leq 0_L\} \cie \{z\in L \; \vert \; (y\wedge l)\wedge z \leq 0_L\}
\end{displaymath}
of the index sets defining the negations of $m$ and $y\wedge l$. Thus $\neg m \leq \neg (y\wedge l)$ and so $\neg \neg m \geq \neg \neg (y\wedge l) = (\neg \neg y)\wedge (\neg \neg l)$.  Hence $A(k_{\neg \neg y}) = A(k_y)$ occurs in the wedge defining $\phi(\neg \neg (l\rimp m))$ and we see that $(\phi(l)\rimp \phi(m)) \leq \phi(\neg \neg(l\rimp m)) = \phi(l\rimp m)$ completing the proof.
\end{proof}

We are now ready to show that our construction gives a (somewhat arcane) realization of the Booleanization of $L$.

\begin{prop}
The morphism $\phi$ induces an isomorphism of complete Boolean algebras $A(\sfT_L) \iso L_{\neg \neg}$.
\end{prop}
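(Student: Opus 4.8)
The plan is to invoke the universal property of the Booleanization from Theorem~\ref{thm_booleanization}. By Proposition~\ref{prop_boolean} the lattice $A(\sfT_L)$ is a complete Boolean algebra, and by the two preceding lemmas $\phi\colon L\to A(\sfT_L)$ is a morphism of complete Heyting algebras. Since the double negation $\neg\neg\colon L\to L_{\neg\neg}$ is the unit of the adjunction between $\cHeyt$ and $\cBool$, the morphism $\phi$ factors uniquely as $\phi=\bar\phi\circ(\neg\neg)$ for a morphism of complete Boolean algebras $\bar\phi\colon L_{\neg\neg}\to A(\sfT_L)$. It then remains to prove that $\bar\phi$ is a bijection: a bijective morphism of complete Boolean algebras is automatically an isomorphism, because if $f(a)\leq f(b)$ then $f(a\wedge b)=f(a)$, so injectivity forces $a\wedge b=a$ and hence $f^{-1}$ is order preserving, and an order-bijection between frames preserves meets, joins, and implication (the last being $a\rimp b=\bigvee\{x\mid x\wedge a\leq b\}$).

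Surjectivity of $\bar\phi$ is immediate: as already noted, every Bousfield class in $\sfT_L$ has the form $A(k_l)=\phi(l)$, and $\phi$ factors through $\bar\phi$.

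For injectivity, every element of $L_{\neg\neg}$ is $\neg\neg l$ for some $l\in L$, so it suffices to show that $\phi(l)=\phi(m)$ forces $\neg\neg l=\neg\neg m$. A Bousfield class in $\sfT_L$ is determined by which generators $k_x$ it contains, and $k_x\in A(k_l)$ precisely when $x\wedge l=0_L$; hence $A(k_l)=A(k_m)$ is equivalent to $\{x\in L\mid x\wedge l=0_L\}=\{x\in L\mid x\wedge m=0_L\}$. Since $\neg l$ is by definition the largest element of this set, taking maxima gives $\neg l=\neg m$ and therefore $\neg\neg l=\neg\neg m$. (The reverse implication, that $\neg\neg l=\neg\neg m$ yields $\phi(l)=\phi(m)$, follows from Lemma~\ref{lem_negation} together with the fact that negation is an involution on $A(\sfT_L)$, so that $\phi$ is constant on double-negation classes; but only the forward implication is needed.) Thus $\bar\phi$ is injective, and combined with surjectivity it is an isomorphism of complete Boolean algebras.

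I do not expect a real obstacle here. The only step carrying genuine content is the injectivity argument, which reduces to the single observation that the Bousfield class $A(k_l)$ literally encodes the set $\{x\in L\mid x\wedge l=0_L\}$ whose supremum is $\neg l$; the factorization through the Booleanization and the rigidity of bijective Boolean-algebra morphisms are purely formal.
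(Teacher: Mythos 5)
Your proof is correct and follows essentially the same route as the paper: factor $\phi$ through the Booleanization via the universal property of Theorem~\ref{thm_booleanization}, deduce surjectivity from that of $\phi$, and prove injectivity by showing $A(k_l)=A(k_m)$ forces $\neg\neg l=\neg\neg m$. The only (harmless, arguably nicer) difference is in the last step: where the paper appeals to the fact that elements of a Boolean algebra are determined by their annihilators, you observe directly that $\{x\in L\mid x\wedge l=0_L\}$ is the principal down-set of $\neg l$, so equality of these sets immediately gives $\neg l=\neg m$.
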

\begin{proof}
By the universal property of the Booleanization (see Theorem \ref{thm_booleanization}) the morphism $\phi$ must factor via a unique map of complete Boolean algebras $\phi' \colon L_{\neg \neg} \to A(\sfT_L)$. Since $\phi$ is surjective so is $\phi'$.

The map $\phi'$ is also injective. Indeed, $\phi'(\neg \neg l) = \phi'(\neg \neg m)$ if and only if $A(k_l) = A(k_m)$ if and only if
\begin{displaymath}
\{z\in L_{\neg \neg} \;  \vert \;  z\wedge (\neg \neg l) = 0_L\} = \{z\in L_{\neg \neg} \; \vert \; z\wedge (\neg \neg m) = 0_L\},
\end{displaymath}
which can happen if and only if $\neg \neg l = \neg \neg m$. This last statement is a consequence of the fact that elements of a Boolean algebra are completely determined by their annihilators \cite{Vechtomov}.
\end{proof}

In particular, if we started with a Boolean algebra $L$ we would recover it as $A(\sfT_L)$.

\begin{cor}
Every complete Boolean algebra occurs as the Bousfield lattice of an algebraic tensor triangulated category which can be presented as the homotopy category of a combinatorial stable monoidal model category.
\end{cor}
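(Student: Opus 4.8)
The plan is to apply the construction of this section to the Boolean algebra itself, and then to strengthen the model-categorical part of the claim. Given a complete Boolean algebra $B$, I would regard it as a complete Heyting algebra and form $\sfT_B$ exactly as above. The preceding proposition then supplies an isomorphism of complete Boolean algebras $A(\sfT_B)\iso B_{\neg\neg}$, and since every element of a Boolean algebra is regular we have $B_{\neg\neg} = B$, so the Bousfield lattice of $\sfT_B$ is $B$ itself. The category $\sfT_L$ was already declared to be an algebraic tensor triangulated category with a stable combinatorial model, so the only genuinely new ingredient to produce is a \emph{monoidal} stable combinatorial model for $\sfT_L$ whose induced tensor product is the one with $k_l\otimes k_{l'} = k_{l\wedge l'}$.

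For this I would realize the tensor product on $\sfT_L$ as a truncated Day convolution. Let $\mathcal V = \mathrm{Ch}(k)$ carry its projective model structure; since $k$ is a field this is a combinatorial, stable, symmetric monoidal model category in which every object is cofibrant. Take $\mathcal M$ to be the category of $(L\setminus\{0_L\})$-indexed families of objects of $\mathcal V$ with the product model structure, in which weak equivalences, fibrations, and cofibrations are all defined componentwise; being a set-indexed product of a combinatorial stable model category, $\mathcal M$ is again combinatorial and stable, with a set of generating (trivial) cofibrations given by the maps $\widehat f_{\,l}$ that are a generating (trivial) cofibration $f$ of $\mathcal V$ in position $l$ and $0$ elsewhere. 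Equip $\mathcal M$ with the monoidal structure $(X\otimes Y)_m = \bigoplus_{l\wedge l' = m} X_l\otimes_k Y_{l'}$, where the sum ranges over pairs of nonzero lattice elements with $l\wedge l' = m \neq 0_L$, and unit $k_1$. This is symmetric monoidal: associativity holds because $l\wedge l'\wedge l'' = m\neq 0_L$ already forces $l\wedge l'\neq 0_L$ and $l'\wedge l''\neq 0_L$, so the truncation discards nothing; and on the standard objects it recovers $k_l\otimes k_{l'} = k_{l\wedge l'}$ with the convention $k_{0_L}=0$. Since every object of $\mathcal V$ is cofibrant and $\otimes_k$ is exact, this $\otimes$ preserves componentwise weak equivalences in each variable, hence descends on $\mathrm{Ho}(\mathcal M) = \prod_{l\neq 0_L}\sfD(k) = \sfT_L$ to the tensor product built earlier.

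To finish I would check the pushout--product and unit axioms, and for the former it suffices to work with the generating (trivial) cofibrations. A short computation gives $A^{(l)}\otimes C^{(l')} = (A\otimes_k C)^{(l\wedge l')}$ for objects concentrated in single positions, so $\widehat f_{\,l}\,\Box\,\widehat g_{\,l'}$ is either $\mathrm{id}_0$ (when $l\wedge l'=0_L$) or the pushout--product $f\Box g$ of $\mathcal V$ sitting in position $l\wedge l'$; in both cases it is a cofibration, and a trivial one as soon as $f$ or $g$ is, by the pushout--product axiom in $\mathcal V$. The unit $k_1$ is cofibrant, so the unit axiom is immediate, and, $k$ being a field, the monoid axiom is equally routine. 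Thus $\mathcal M$ is a combinatorial stable symmetric monoidal model category with $\mathrm{Ho}(\mathcal M)\iso\sfT_B$ as tensor triangulated categories, and the corollary follows. I expect the only real obstacle to be isolating the correct monoidal model --- in particular recognizing that the index-collapsing tensor product is a well-behaved convolution --- rather than the axiom-checking itself, which is straightforward once the bookkeeping around the absorbing element $0_L$ is set up correctly.
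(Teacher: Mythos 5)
Your proof is correct and follows the paper's own route: apply the construction to $B$ viewed as a complete Heyting algebra and use that $B_{\neg\neg}=B$ since every element of a Boolean algebra is regular, so the preceding proposition gives $A(\sfT_B)\iso B$. The explicit truncated Day convolution model structure you build is a genuine bonus---the paper merely asserts that $\sfT_L$ ``is easily seen'' to have a stable combinatorial (monoidal) model---and your verification, including the observation that $l\wedge l'\wedge l''\neq 0_L$ forces the intermediate meets to be nonzero so the truncation is associative, is sound.
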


This gives a significant class of lattices that occur as Bousfield lattices (albeit in this instance via a somewhat artificial construction). It is known that the Bousfield lattice of a triangulated category need not be Boolean, nor even distributive. It would be very informative to have a more intricate construction that allowed one to realise a prescribed complete lattice as a Bousfield lattice or to characterise the lattices which can occur.

\section{Not every localizing ideal is a Bousfield class}\label{sec:not}
We give an example of a rather simple tensor triangulated category possessing a localizing $\otimes$-ideal which is not a Bousfield class; this shows that Conjecture~9.1 of \cite{HPBousfield} need not be true in an arbitrary tensor triangulated category. Another such example, which is more natural at the cost of greater technicality, is given in \cite{StevensonAbsFlat}.

One can view our example here as a special case of the construction involving lattices. However, we instead choose to describe it via a similar construction using a monoid as in Remark~\ref{rem_monoidal}.

Let $k$ be a field and let $M$ denote the monoid with two elements $\{1,m\}$ where $1$ is the identity and $m^2 = m$. Let $\sfT_M$ denote the triangulated category constructed as in Remark~\ref{rem_monoidal} i.e.\ $\sfD(k_1) \oplus \sfD(k_m)$, where $k_1 = k_m = k$, and we define a tensor product $\otimes$ on $\sfT_M$ using the monoid structure of $M$: $k_1$ is the unit object and $k_m \otimes k_m \iso k_m$. % As usual one extends this using the fact that $\sfD(k)$ is pure-semisimple with a unique indecomposable object, up to suspension, to obtain a tensor triangulated structure on $\sfT_M$. It is easily checked that $\sfT_M$ is a compactly generated algebraic tensor triangulated category arising as the homotopy category of a stable combinatorial monoidal model category i.e.\ it is an exceedingly nice triangulated category.

It is clear that $A(\sfT_M)$ is the two element lattice consisting of $A(0)$ and $A(k_1)$. On the other hand $\loc(k_m)$ is a non-zero proper tensor ideal of $\sfT_M$ and one sees there is no object $X$ of $\sfT_M$ such that $\loc(k_m) = A(X)$.

The category $\sfT_M$ also gives an example of a smashing localization where the acyclization functor is given by tensoring with an idempotent object but the localization functor is not. In particular, $\sfT_M$ cannot be rigidly compactly generated i.e.\ the full subcategory of compact objects is not rigid (see \cite{Stevenson16pp}*{Definition~1.3} for a discussion of rigidity). To see this note that $\loc(k_m)$ is, as noted above, a localizing $\otimes$-ideal generated by the compact object $k_m$. It is thus smashing and one sees easily that the corresponding acyclization functor is given by $k_m \otimes (-)$. However, there is no tensor idempotent realizing the localization functor, given by projection onto the component $D(k_1)$, as such an idempotent would have to be tensor orthogonal to $k_m$ and no such non-zero object exists in $\sfT_M$.

\section{Poor behaviour of the Bousfield lattice under localization}\label{sec:poor}

We now give a slightly modified version of the last example which shows that the Bousfield lattice is not necessarily well behaved under localization by a tensor ideal; in particular we show that localizing by the ``nilradical'' can destroy the good behaviour of the Bousfield lattice. To begin, let us define what we mean by the nilradical of a tensor triangulated category.

\begin{defn}
Let $\sfT$ be a tensor triangulated category with small coproducts. The \emph{nilradical} of $\sfT$, denoted $\sqrt{\sfT}$, is the smallest radical localizing $\otimes$-ideal containing $0$. Explicitly, it is the smallest localizing subcategory of $\sfT$ which is closed under tensoring with all objects of $\sfT$ and has the property that if it contains $Y^{\otimes n}$ then it contains $Y$.
\end{defn}

We will use the same construction as in Remark~\ref{rem_monoidal} and the last section. Let $M$ be the commutative monoid $\{0,1,x,y\}$ with multiplication table
\begin{displaymath}
\begin{tabular}{| c || c | c| c| c|}
\hline
  & 0 & 1 & x & y \\ \hline
  &   &   &   &   \\[-1.0em] \hline
0 & 0 & 0 & 0 & 0 \\ \hline 
1 & 0 & 1 & x & y \\ \hline
x & 0 & x & 0 & 0 \\ \hline
y & 0 & y & 0 & y \\ \hline
\end{tabular}
\end{displaymath}
We associate to $M$ a tensor triangulated category 
\begin{displaymath}
\sfT_M = \sfD(k_1)\oplus \sfD(k_x) \oplus \sfD(k_y)
\end{displaymath}
with tensor product defined using the multiplication of $M$. One checks easily that the spectrum of localizing prime tensor ideals of $\sfT_M$ (which agrees with the spectrum, in the sense of \cite{BaSpec}, of the compacts) has two points $\langle k_x \rangle = \sqrt{\sfT}$ and $\langle k_x, k_y \rangle$. It is the same as the topological space underlying the spectrum of a discrete valuation ring. Both of these ideals are Bousfield classes:
\begin{displaymath}
\langle k_x \rangle = A(k_y) \quad \text{and} \quad \langle k_x, k_y \rangle = A(k_x).
\end{displaymath}
Denoting by $\Loc^{\sqrt{\otimes}}(\sfT_M)$ the collection of radical localizing $\otimes$-ideals we have
\begin{displaymath}
A(\sfT_M) = \Loc^{\sqrt{\otimes}}(\sfT_M) \cup \{\langle 0 \rangle\} = \{\sfT_M, \langle k_x \rangle, \langle k_x, k_y\rangle, \langle 0 \rangle\}.
\end{displaymath}
There is also a non-radical ideal, namely $\langle k_y \rangle$, which is not a Bousfield class.

It is easily seen that forming the quotient $\sfS = \sfT_M/\sqrt{\sfT_M}$ does not change the spectrum and gives a bijection $\Loc^{\sqrt{\otimes}}(\sfT_M) \iso \Loc^{\sqrt{\otimes}}(\sfS)$. However, it is no longer true that every radical ideal is a Bousfield class in $\sfS$. Indeed, $\sfS$ is none other than the example from the last section and so the radical ideal $\loc(k_y)$ is not a Bousfield class.

\bibliography{greg_bib}
\end{document}